\newtheorem{theorem}{Theorem}[section]
\newtheorem{lemma}[theorem]{Lemma}
\newtheorem{ex}{Exercise}[section]
\newtheorem{examp}[theorem]{Example}
\newtheorem{corollary}[theorem]{Corollary}
\newtheorem{remar}[theorem]{Remark}
\newcommand{\diams}{\unskip\nobreak\hfil\penalty50%
\hskip1em\hbox{}\nobreak\hfil%
$\diamondsuit$\parfillskip=0pt\finalhyphendemerits=0}
\newcommand{\cal}{\mathcal}
\newcommand{\bfind}[1]{\index{#1}{\bf #1}}
\newcommand{\n}{\par\noindent}
\newcommand{\sn}{\par\smallskip\noindent}
\newcommand{\mn}{\par\medskip\noindent}
\newcommand{\bn}{\par\bigskip\noindent}
\newcommand{\pars}{\par\smallskip}
\newcommand{\parm}{\par\medskip}
\newcommand{\parb}{\par\bigskip}
\newcommand{\subsetuneq}{\mathrel{\raisebox{.8ex}{\footnotesize%
$\displaystyle\mathop{\subset}_{\not=}$}}}
\newcommand{\isom}{\simeq}
\newcommand{\cB}{\mathcal B}
\newcommand{\bcB}{\overline{\mathcal B}}
\newcommand{\cO}{\mathcal O}
\newcommand{\cX}{\mathcal X}
\newcommand{\cZ}{\mathcal Z}
\font\tenlv=msbm10 scaled 1200
\font\sevenlv=msbm7 scaled 1200
\font\fivelv=msbm5 scaled 1200
\def\lv #1{{\mathchoice{{\hbox{\tenlv #1}}}{{\hbox{\tenlv #1}}}
{{\hbox{\sevenlv #1}}}{{\hbox{\fivelv #1}}}}}
\newcommand{\N}{\lv N}
\newcommand{\Q}{\lv Q}
\newcommand{\R}{\lv R}
\newcommand{\Z}{\lv Z}
\begin{document}
\title[Valuation theory of exponential Hardy fields II]{Valuation theory of exponential Hardy fields II: Principal parts of germs in the Hardy field of o-minimal exponential expansions of the reals}
\author{Franz-Viktor Kuhlmann and Salma Kuhlmann}
\address{Institute of Mathematics,
University of Silesia,
ul.~Bankowa 14,
40-007 Katowice,
Poland}
\email{fvk@math.us.edu.pl}
\address{FB Mathematik und Statistik, Universit\"at Konstanz, 78464 Konstanz, Germany}
\email{salma.kuhlmann@uni-konstanz.de}
\date{12.\ 9.\ 2016}
\subjclass[2000]{ Primary: 06A05, 12J10, 12J15, 12L12, 13A18;
Secondary: 03C60, 12F05, 12F10, 12F20.} \keywords{polynomially bounded o-minimal expansion of the reals,
real exponential function, convex valuation, value group, residue field, Hardy field, power series expansion.}
\begin{abstract}
We present a general structure theorem for the Hardy field of an o-minimal expansion of the reals by restricted analytic functions and an unrestricted exponential.
We proceed to analyze its residue fields with respect to arbitrary convex valuations, and deduce a power series expansion of exponential germs. We apply these results to cast ``Hardy's conjecture'' (see \cite[p.111]{[KS]}) in a more general framework.
This paper is a follow up to \cite{[K--K2]} and is partially based on unpublished results of \cite{[K--K]}. A previous version \cite{[K--K1]}
(which was dedicated to Murray A. Marshall on his 60th birthday) remained unpublished. In \cite{[W]} our structure theorem for the residue fields was rediscovered and applied to the diophantine context. Due to this revived interest, we decided to rework the arXiv preprint \cite{[K--K1]} and submit it to the Marshall Memorial Volume.
\end{abstract}
\maketitle

%
%
\section{Introduction}\label{section1}
In this paper, we analyze the structure of the Hardy fields associated
with o-minimal expansions of the reals with exponential function. More precisely, we take $T$ to be
the theory of a polynomially bounded o-minimal expansion ${\cal P}$ of
the ordered field of real numbers by a set ${\cal F}_T$ of real-valued
functions. We assume that the language of $T$ contains a symbol for
every 0-definable function, and that $T$ defines the
restricted exponential and logarithmic functions. Now let $T(\exp)$ denote the
theory of the expansion $({\cal P},\exp)$ where $\exp$ is the
un-restricted real exponential function.
Then also $T(\exp)$ is o-minimal, and admits
quantifier elimination and a universal axiomatization in the language
augmented by $\log$ \cite{[D--S2]}. We consider the Hardy field $H({\cal P},\exp)$ (see
Section~\ref{sectHf} for the definition). Our general assumptions (see Section \ref{sectrf}) \ imply that $H({\cal P},\exp)$ is a model of $T(\exp)$ and is equal to the
closure $LE_{\cal F _T}(x)$ of its subfield ${\mathbb R}(x)$ under real closure, ${\cal F _T}\,$, $\exp$ and
its inverse $\log$; here, $x$ denotes the germ of the identity function \cite{[D--M--M2]}.
\mn
We study convex valuations on $H({\cal P},\exp)$. To this end, for ${\cal F}\subseteq {\cal F}_T$, we introduce an intrinsic form of power
series expansions for the elements of $LE_{\cal F}(x)$.
We use monomials, which are the elements in the image of a suitable
cross-section, together with coefficients from residue fields $LE_{\cal F}(x)w$ with respect to significant convex valuations $w$.
We apply our results in particular to ${\cal F} = {\cal F}_{\rm an}$ (the family of restricted analytic functions),  $T=T_{\rm an}$  (the polynomially bounded o-minimal  theory of the expansion $\mathbb{R}_{\rm an}$ of the reals by restricted analytic functions), see \cite{[D--M--M2]} for more details about this theory.
\mn
The paper is organised as follows. In Section \ref{sectprel} we gather in a concise manner the necessary background. In Section \ref{section3} we prove our structure theorem for $LE_{\cal F}(x)$ and its residue fields, see Theorem \ref{mth1}. The main result leading to the definition of principal parts (see the definition in Section \ref{section4}) is Theorem \ref{thrprh}. Section \ref{section4} is dedicated to its proof. The final Section \ref{section5} considers applications to the Hardy field $H(\R_{\rm an, exp})$.
The principal part of a function $h\in H(\R_{\rm an, exp})$ carries
information about the asymptotic behavior of the function $\exp h(x)$
(Theorem~\ref{asppth}). Corollary~\ref{asppcor} gives a powerful criterion - using principal parts - for an exponential germ to be asymptotic to a composition of semialgebraic functions, exp, log and restricted analytic functions.  This puts the particular solution of the Hardy
problem in a more general framework; see the computations following Corollary~\ref{asppcor}.
Finally, we provide a further application to embeddings of Hardy fields into fields of generalized power series, see Corollary \ref{embps}.

%
%
\section{Some preliminaries}                \label{sectprel}
%
%
%
\subsection{Valuations}
If $(K,w)$ is a valued field, then we write $wa$ for the value of $a\in
K$ and $wK$ for its value group $\{wa\mid 0\ne a\in K\}$. Further, we
write $aw$ for the residue of $a$, and $Kw$ for the residue field. The
valuation ring is denoted by ${\cal O}_w\,$. For generalities on
valuation theory, see \cite{[R]}, and for convex valuations in particular see \cite{[KS]} or \cite{[EP]}.

A valuation $w$ on an ordered field $K$ is called \bfind{convex} if
${\cal O}_w$ is convex. The convex valuation rings of an ordered field
are linearly ordered by inclusion. If ${\cal O}_w\subsetuneq {\cal
O}_{w'}$ then $w$ is said to be \bfind{finer} than $w'$, and $w'$ is a 
\bfind{coarsening} of $w$. If $w$ and $w'$ are two convex valuations on the same ordered field, we will write $w<w'$ if $w$ is a proper
coarsening of $w'$, that is, if $\cO_{w'} \subsetuneq \cO_w\,$.

\mn

There is always
a finest convex valuation, called the \bfind{natural valuation}. It is
characterized by the fact that its residue field is archimedean. A
valuation $w$ on an ordered field is convex if and only if the natural
valuation is finer or equal to $w$.
{\it Throughout this paper, $v$ will always denote the natural
valuation,} unless stated otherwise.

If $a,b$ are elements of an ordered group or an ordered field, then we
write $a\ll b<0$ if $a<b<0$ and $\forall n\in\N: a<nb$. Similarly,
$a\gg b>0$ if $a>b>0$ and $\forall n\in\N: a>nb$. We set $|a|:=\max
\{a,-a\}$. Then the natural valuation is characterized by:
\begin{equation}                            \label{prelnv}
va<vb\>\Leftrightarrow\> |a|\gg |b|\;.
\end{equation}
Note that if $\R\subset K$ and $a\in K$ with $va=0$, then there is some
$r\in\R$ such that $v(a-r)>0$. Further, $wr=0$ for every $r\in\R$
and every convex valuation $w$.

\begin{lemma}                               \label{prelcoars}
Let $v,w$ be arbitrary valuations on some field $K$. Suppose that $v$
is finer than $w$. Then for all $a,b\in K$,
\begin{equation}                            \label{prelfin}
va\leq vb\>\Rightarrow\>wa\leq wb\;.
\end{equation}
In particular, $wa>0\Rightarrow va>0$. Further, $H_w:=\{vz\mid z\in
K\wedge wz=0\}$ is a convex subgroup of the value group $vK$ of $v$. We
have that $vz\in H_w\Leftrightarrow z\in {\cal O}_w^\times\,$. There is
a canonical isomorphism $wK\isom vK/H_w\,$. Conversely, every convex
subgroup of $vK$ is of the form $H_w$ for some valuation $w$ such that
$v$ is finer or equal to $w$.

The valuation $v$ of $K$ induces a valuation $v/w$ on $Kw$. There are
canonical isomorphisms $v/w(Kw)\isom H_w$ and $(Kw)v/w\isom Kv$. If
$Kw$ is embedded in ${\cal O}_w$ such that the restriction of the
residue map is the identity on $Kw$, then $v/w=v|_{Kw}$
(up to equivalence). Writing $v$ instead of $v|_{Kw}\,$, we then
have that $v(Kw)=H_w$ and $(Kw)v=Kv$.
\end{lemma}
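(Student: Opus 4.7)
The proof organizes around the standing fact that $v$ finer than $w$ means $\mathcal{O}_v\subsetneq\mathcal{O}_w$ (or equality, but the argument is the same). I would first derive the implication (\ref{prelfin}): given $va\leq vb$ with $a\neq 0$, the quotient $b/a$ satisfies $v(b/a)\geq 0$, hence $b/a\in\mathcal{O}_v\subseteq\mathcal{O}_w$, which yields $w(b/a)\geq 0$, i.e.\ $wa\leq wb$. The case $a=0$ is trivial since then $b=0$ too. Setting $a=1$ gives $wb\geq 0$ whenever $vb\geq 0$; by passing to reciprocals one obtains the complementary statement $wa>0\Rightarrow va>0$ (if $va\leq 0$ then $v(1/a)\geq 0$, so $w(1/a)\geq 0$, contradicting $wa>0$).

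Next I turn to $H_w$. That it is a subgroup of $vK$ is immediate from $w(z_1z_2)=wz_1+wz_2$ and $w(z^{-1})=-wz$. For convexity, suppose $0\leq \gamma\leq vz$ with $vz\in H_w$; writing $\gamma=vy$, the inequalities $v1\leq vy\leq vz$ combined with (\ref{prelfin}) give $0\leq wy\leq wz=0$, so $wy=0$ and $\gamma\in H_w$. The equivalence $vz\in H_w\Leftrightarrow z\in\mathcal{O}_w^\times$ follows because $vz=vz'$ with $wz'=0$ forces $v(z/z')=0$, hence $z/z'\in\mathcal{O}_v\subseteq\mathcal{O}_w$, and likewise $z'/z\in\mathcal{O}_w$, so $wz=wz'=0$; conversely any unit of $\mathcal{O}_w$ lies in $H_w$ by definition. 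The map $vz\mapsto wz$ is then a well-defined surjective group homomorphism $vK\to wK$ (well-defined by (\ref{prelfin}) applied in both directions) with kernel exactly $H_w$, yielding the canonical isomorphism $wK\isom vK/H_w$.

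For the converse direction I would, given a convex subgroup $H\subseteq vK$, define $\mathcal{O}_w:=\{z\in K\mid \exists h\in H,\ vz\geq h\}$. Using convexity of $H$ one checks routinely that this is a valuation ring containing $\mathcal{O}_v$, with associated valuation $w=\pi\circ v$ where $\pi:vK\to vK/H$ is the projection; then tautologically $H_w=H$. Convexity of $\mathcal{O}_w$ in the order follows because $\mathcal{O}_v$ is convex and $\mathcal{O}_w$ is determined by a convex subgroup of $vK$.

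Finally, to obtain the induced valuation $v/w$ on $Kw$, I would note that $\mathcal{M}_w\subseteq\mathcal{M}_v\subseteq\mathcal{O}_v\subseteq\mathcal{O}_w$, so $\mathcal{O}_v/\mathcal{M}_w$ is a valuation ring of $Kw=\mathcal{O}_w/\mathcal{M}_w$ whose maximal ideal is $\mathcal{M}_v/\mathcal{M}_w$. Define $v/w$ as its associated valuation. Its value group is identified with the image of $\mathcal{O}_w^\times$ under $v$, which by the characterization of $H_w$ is precisely $H_w$; its residue field is $(\mathcal{O}_v/\mathcal{M}_w)/(\mathcal{M}_v/\mathcal{M}_w)\isom \mathcal{O}_v/\mathcal{M}_v=Kv$. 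If a section $\iota:Kw\hookrightarrow\mathcal{O}_w$ of the residue map is chosen, then for $\bar z\in Kw$ the value $(v/w)(\bar z)$ equals $v(\iota(\bar z))$, so the induced valuation agrees with $v|_{Kw}$ up to the canonical identification.

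The only mildly delicate point is verifying that $\mathcal{O}_w$ defined from $H$ really is a valuation ring and that the constructions in the last paragraph are independent of representatives; both reduce to straightforward use of the convexity of $H$ and of $\mathcal{O}_w\,$. The rest is bookkeeping with the chain $\mathcal{M}_w\subseteq\mathcal{M}_v\subseteq\mathcal{O}_v\subseteq\mathcal{O}_w\,$.
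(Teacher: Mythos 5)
Your proof is correct, but there is nothing in the paper to compare it against: Lemma~\ref{prelcoars} is stated without proof, as standard background deferred to the reference [R] (Ribenboim) on valuation theory. Your derivation is the standard one: the implication (\ref{prelfin}) via $b/a\in\mathcal{O}_v\subseteq\mathcal{O}_w$; well-definedness, subgroup, and convexity of $H_w$ all as consequences of (\ref{prelfin}); the quotient isomorphism $wK\isom vK/H_w$ from the induced surjection with kernel $H_w$; the converse via $w=\pi\circ v$ for $\pi:vK\to vK/H$; and the induced valuation $v/w$ from the chain $\mathcal{M}_w\subseteq\mathcal{M}_v\subseteq\mathcal{O}_v\subseteq\mathcal{O}_w$, with value group $\mathcal{O}_w^\times/\mathcal{O}_v^\times\isom H_w$ and residue field $\mathcal{O}_v/\mathcal{M}_v=Kv$. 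All of these steps check out.

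One small remark on scope: your closing sentence of the third paragraph, asserting order-convexity of $\mathcal{O}_w$, addresses something the lemma does not claim. The lemma as stated concerns arbitrary valuations on a field $K$, not convex valuations on an ordered field, so there is no order around; that observation is only relevant in the specialized convex-valuation setting used elsewhere in the paper, and there it would additionally require that $v$ itself be convex (which is not assumed here). It is harmless as an aside but should be dropped or flagged as conditional on an ordered-field hypothesis. Everything else is both correct and appropriately general.
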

We will call $H_w$ the \bfind{convex subgroup associated with $w$}
and $w$ the \bfind{valuation associated with $H_w\,$}. Since
the isomorphism is canonical, we will write $wK=vK/H_w\,$.

The order type of the chain of nontrivial convex subgroups of an ordered
abelian group $G$ is called the \bfind{rank} of $G$. If finite, then the
rank is not bigger than the maximal number of rationally independent
elements in $G$ (which is the dimension of its divisible hull as a $\Q$-vector space). In particular, $G$ has finite rank if it is finitely generated.

\pars
From (\ref{prelnv}) and (\ref{prelfin}) it follows that for every convex
valuation $w$,
\begin{equation}                            \label{prelnv1}
|a|\leq |b|\>\Rightarrow\> wa\geq wb\;.
\end{equation}

\bn

Take any valued field $(K,v)$. A \bfind{field of representatives for the residue field of $(K,v)$} is a subfield
$k$ of $K$ such that $v$ is trivial on $k$ (or equivalently, $k$ is contained in the valuation ring $\cO$), and
for every $a\in\cO$ there is $b\in k$ such that $v(a-b)>0$. It then follows that the residue map $\cO\ni a\mapsto
av$ induces an isomorphism from $k$ to the residue field. A \bfind{cross-section} of $(K,v)$ is an embedding $\iota$
of the value group $vK$ in the multiplicative group $K^\times$ such that $v\iota(\alpha)=\alpha$ for all $\alpha\in vK$.

%
%
\subsection{Hardy fields}                   \label{sectHf}
Let us recall some basic facts about Hardy fields (see Chapter 6, Section 2 in \cite{[KS]}).  
Assume that $T$ is the theory of any
o-minimal expansion  ${\cal R}$ of the ordered field of real numbers by real-valued
functions. The Hardy field of
${\cal R}$, denoted by $H({\cal R})$, is the set of germs at $\infty$ of
unary ${\cal R}$-definable functions $f:{\mathbb R}\rightarrow {\mathbb R}$.
Then $H({\cal R})$ is an ordered differential field which contains
${\cal R}$ as a substructure. Let $x\in H({\cal R})$ be the germ of the identity function.
Then $H({\cal R})$ is the closure of ${\mathbb R}(x)$ under all 0-definable
functions of ${\cal R}$, \cite{[D--M--M2]}.

If $f,g$ are non-zero unary
${\cal R}$-definable functions on ${\cal R}$, then we will denote their
germs in $H({\cal R})$ by the same letters. The
following holds for non-zero germs:
\begin{equation}
v f\>=\>v g \;\Longleftrightarrow\;\lim_{x\rightarrow
\infty}\frac{f(x)}{g(x)}\;\mbox{ is a non-zero constant in }{\mathbb R}\;.
\end{equation}
The non-zero germs $f$ and $g$ are \bfind{asymptotic}
 if and only if this constant is $1$, and we have:
\begin{equation}                                      \label{asylog}
\mbox{ $f$ and $g$ are asymptotic} \;\Longleftrightarrow\; v(f-g)\> >\> v(g)\>.
\end{equation}

see \cite[Lemma 6.22]{[KS]} 

%
%
\subsection{ General assumptions on $T$}
\label{sectrf}
 Throughout this paper, we will assume
that $T$ is the theory of a
polynomially bounded o-minimal expansion ${\cal P}$ of the ordered field
of real numbers by real-valued functions. Further, we assume that $T$
defines the restricted exp and log. Then also $T(\exp)$ is o-minimal
(cf.\ \cite{[D--S2]}). Here, $T(\exp)$ denotes the theory of the expansion
$({\cal P},\exp)$ where $\exp$ is the un-restricted real exponential
function. 

We let ${\cal F}_T$ denote the set of function symbols in the language
of $T$ and assume that there is a function symbol in ${\cal F}_T$ for
each 0-definable function of ${\cal P}$. This implies that $T$ admits
quantifier elimination and a universal axiomatization. We let ${\cal F}$ denote any subset of ${\cal F}_T\,$.

We denote by $M$ a model of $T$. Often, we will assume further
that $M$ is a model of $T(\exp)$ (but will not distinguish notationally between
$M$ and its reduct to the language of $T$.) Suppose that the field $K$
is a submodel (and hence elementary submodel) of $M$. Take $x_i\in M$,
$i\in I$. By $K\langle x_i\mid i\in I\rangle$ we denote the
$0$-definable closure of $K\cup\{x_i\mid i\in I\}$ in $M$. By our
assumption on the language of $T$, it is the closure of $K\cup\{x_i\mid
i\in I\}$ under ${\cal F}_T\,$, that is, the smallest subfield of $M$
containing $K\cup \{x_i\mid i\in I\}$ and closed under all functions
which interpret the function symbols of ${\cal F}_T$ in $M$. Since $T$
admits a universal axiomatization and $K\langle x_i\mid i\in I\rangle$
is a substructure of $M$, it is a model of $T$. Since $T$ admits
quantifier elimination, $K\langle x_i\mid i\in I\rangle$ is an
elementary substructure of $M$.

%

For an arbitrary subfield $F\subseteq M$, the real closure $F^{\rm r}$ of
$F$ can be taken to lie in $M$ since $M$ is real closed. We denote by
$F^{\rm h}$ the henselization of $(F,v)$. It can be taken to lie in $M$
since the natural valuation $v$ of the real closed field $M$ is henselian.

We let $F^{\cal F}$ denote the smallest subfield of $M$ which contains
$F$ and is ${\cal F}$-closed, that is, closed under all functions on $M$
which are interpretations of function symbols in ${\cal F}$. Analogously, we define $F^{\rm h\cal F}$ to be
the smallest subfield of $M$ which contains $F$ and is ${\cal F}$-closed
and henselian w.r.t.\ $v$, and $F^{\rm r\cal F}$ to be the smallest such subfield which is in addition real closed.
Note that $F^{\cal F}\subseteq F^{\rm h\cal F} \subseteq F^{\rm r\cal F}$.

%
%
\section{A general structure theorem for $LE_{\cal F}(x)$}\label{section3}
In what follows, we work under the assumptions of \cite [Lemma 6.40; pp. 104-105]{[KS]}. More precisely, we let $M$ be a model of $T=T_{\rm an}$ (or of $T_{\rm an}(\exp)$, and $\cal F \subset \cal F_{\rm an}$ be an arbitrary set of convergent power series representing restricted analytic functions, closed under partial derivatives, and containing the restricted $\exp$ and $\log$.
\mn

For the proof  Theorem~\ref{mth1} below,  we need the following lemma. 

\begin{lemma}                               \label{basic2}
Let $M$ be a model of $T_{\rm an}$, $x_i\in M$ be such that the values $vx_i\,$, $i\in I$ are rationally independent. Further, let $w$ be any
convex valuation. Assume that there is a subset $I_w\subset I$ such that $wx_i=0$ for all $i\in I_w$ and that the
values $wx_i\,$, $i\in I\setminus I_w$ are rationally independent.
Then
\[w\R(x_i\mid i\in I)^{\rm r\cal F}\>=\>
\bigoplus_{i\in I\setminus I_w}\Q  wx_i\;\mbox{ \ and \ }\;
w\R(x_i\mid i\in I)^{\rm h\cal F}\>=\>
\bigoplus_{i\in I\setminus I_w} \Z wx_i\;.\]
Further,
\[
\R(x_i\mid i\in I_w)^{\rm r\cal F}
\]
is a field of representatives for the residue field $\R(x_i\mid i\in I)^{\rm r\cal F}w$, and
\[
\R(x_i\mid i\in I_w)^{\rm h\cal F}
\]
is a field of representatives for the residue field $\R(x_i\mid i\in I)^{\rm h\cal F}w$.

\pars
Assume in addition that all $x_i$ with $i\in I\setminus I_w$ are positive. Then the multiplicative group of
$\R(x_i\mid i\in I\setminus I_w)^{\rm r \cal F}$ contains the divisible hull $\cX$ of the group generated by all of these
$x_i$, $\cX$ is the image of a suitably chosen cross-section, and the following holds:
\begin{equation}                     \label{h=r}
\R(x_i\mid i\in I_w)^{\rm r\cal F}(\cX)^{\rm r}\>=\>\R(x_i\mid i\in I_w)^{\rm r\cal F}(\cX)^{\rm h}\>.
\end{equation}
\end{lemma}
\begin{proof}
The first part of this lemma is \cite[Lemma 6.40]{[KS]}.

Now assume that all $x_i >0$ for all $i\in I\setminus I_w$. Since $\R(x_i\mid i\in I\setminus I_w)^{\rm r \cal F}$  is real closed, it
contains $x_i^{1/k}$ for all $i\in I\setminus I_w$ and $k\in\N$. This yields that its multiplicative group contains the divisible hull $\cX$ of the group generated
by all of these $x_i\,$.

The restriction of $w$ to $\cX$ is a group homomorphism onto the value group $w\R(x_i\mid i\in I)^{\rm r\cal F}$;
it is injective since the values $wx_i\,$, $i\in I\setminus I_w$ are rationally independent. The inverse of this
isomorphism is a cross-section with image $\cX$.

From what we have proved, we obtain that $w\R(x_i\mid i\in I_w)^{\rm r\cal F}(\cX)^{\rm h}=
w\R(x_i\mid i\in I_w)^{\rm r\cal F}(\cX)=w\R(x_i\mid i\in I)^{\rm r\cal F}$, which is divisible. Further,
the residue field of $\R(x_i\mid i\in I_w)^{\rm r\cal F}(\cX)^{\rm h}$ is $\R(x_i\mid i\in I_w)^{\rm r\cal F}$,
which is real closed. Thus by \cite[Theorem 4.3.7]{[EP]},
$\R(x_i\mid i\in I_w)^{\rm r\cal F}(\cX)^{\rm h}$ is real closed, which gives equation~(\ref{h=r}).
\end{proof}

 We now fix any non-archimedean model
$M$ of $T(\exp)$ which contains $(\R,+,\cdot ,<,{\cal F},\exp)$
as a substructure. We recall from the introduction that $LE_{\cal F}(x)$
denotes the closure of the subfield ${\mathbb R}(x)$ under real closure, ${\cal F}$, $\exp$ and
its inverse $\log$; here, $x$ denotes any infinitely large and positive element (i.e.\ $x>0$ and $vx < 0$).
The following is the structure theorem which we will put to work. 

\begin{theorem}                             \label{mth1}
$LE_{\cal F}(x)$ is of the form
\begin{equation}                            \label{formR}
\R(\cX)^{\rm r\cal F}\>=\>\R(\cX)^{\rm h\cal F}\>,
\end{equation}
where $\cX$ is a subgroup of the multiplicative group of positive elements of $LE_{\cal F}(x)$ which is the image
of a cross-section, with the following properties:
\sn
a) \ $\cX$ contains $x$ and $\log_m x$ for all $m\in\N$,
\sn
b) \ for every convex valuation $w$ on $LE_{\cal F}(x)$, if
\[
\cX_w\>:=\>\{x'\in \cX\mid wx'=0\}\>,
\]
then
\begin{equation}                            \label{rf=R}
\R(\cX_w)^{\rm r {\cal F}}\>=\> \R(\cX_w)^{\rm h {\cal F}}\>\subseteq\> LE_{\cal F}(x)
\end{equation}
is a field of representatives for the residue field $LE_{\cal F}(x)w$. Identifying $LE_{\cal F}(x)w$ with this
field of representatives, we obtain that
\begin{equation}                            \label{ww'}
LE_{\cal F}(x)w\>\subseteq\>LE_{\cal F}(x)w'\>\subseteq\>LE_{\cal F}(x)
\end{equation}
for all coarsenings $w$ of $v$ and $w'$ of $w$.
\end{theorem}

Note that the set $\cX$ is not uniquely determined. However, we will fix it throughout this paper and call
the elements of $\cX$ the \bfind{monomials} of $LE_{\cal F}(x)$. Correspondingly, we fix the residue fields
$LE_{\cal F}(x)w=\R(\cX_w)^{\rm h {\cal F}}$ for all convex valuations $w$ on $LE_{\cal F}(x)w$.
\begin{proof}
According to \cite[Theorem 6.30]{[KS]}, $LE_{\cal F}(x)$ is of the form
\begin{equation}                            \label{wform}
\R(x_i\mid i\in I)^{\rm r\cal F}\;\mbox{ with $x_i>0$ and $vx_i$ rationally independent,}
\end{equation}
and $x$ and $\log_m x$, $m\in\N$, among the $x_i\,$. Applying Lemma~\ref{basic2} with $w=v$, we find that the
multiplicative group of $LE_{\cal F}(x)$ contains the divisible hull $\cX$ of the subgroup generated by the
$x_i\,$, and that $\cX$ is the image of a cross-section. With $I_v=\emptyset$, we further obtain that
$\R(\cX)^{\rm r \cal F}=\R(\cX)^{\rm h \cal F}$, which implies equation (\ref{formR}).

\parm
It remains to prove part b).
Take a convex valuation $w$ on $LE_{\cal F}(x)$. The group $\cX$ is isomorphic to the divisible value group
$vLE_{\cal F}(x)$, so it is a $\Q$-vector space. For $\cX_w=\{x'\in \cX\mid wx'=0\}$, the values $v\cX_w$ form
a convex subgroup of this value group, which consequently is also divisible and a $\Q$-vector space. Hence also
$\cX_w$ is a $\Q$-vector space. We choose a basis $\cB_w$ of $\cX_w$ and a basis $\cB'_w$ of a complement of
$\cX_w$ in $\cX$. We write $\cB_w=\{x_i\mid i\in I_w\}$, $\cB'_w=\{x_i\mid i\in I'_w\}$ and set $I=I_w\cup I'_w\,$.
As $\{x_i\mid i\in I\}$ is a basis of $\cX$ which is isormorphic to the value group through the valuation, the
values $vx_i$, $i\in I$, are rationally independent. Further, the elements $x_i$, $i\in I\setminus I_w=I'_w$ are
$\Q$-linearly independent over the $\Q$-vector space $\cX_w\,$, which means that no nontrivial linear combination
of these elements has value 0 under $w$. In other words, the values $wx_i$, $i\in I\setminus I_w\,$, are rationally
independent.

\pars
Now we apply Lemma~\ref{basic2} to obtain that $\R(x_i\mid i\in I_w)^{\rm r\cal F}$
is a field of representatives for the residue field $\R(x_i\mid i\in I)^{\rm r\cal F}w$.
We apply Lemma~\ref{basic2} again, this time to the field $\R(x_i\mid i\in I_w)^{\rm r\cal F}$ with its
natural valuation $v$, to find that
\[
\R(x_i\mid i\in I_w)^{\rm r\cal F} \>=\> \R(\cX_w)^{\rm r\cal F} \>=\> \R(\cX_w)^{\rm h\cal F}\>.
\]
For coarsenings $w$ of $v$ and $w'$ of $w$ we have that $wa=0$ implies $w'a=0$, whence $\cX_w\subseteq\cX_{w'}\,$.
This yields eqation (\ref{ww'}) and concludes the proof.
\end{proof}
%
%
\section{An intrinsic version of ``truncation at 0''}\label{section4}
\begin{theorem}                             \label{thrprh}
Take $h\in LE_{\cal F}(x)$ such that $vh<0$. Then there are convex valuations $w_1<w_2<\ldots<w_k=v$ on
$LE_{\cal F}(x)$, $m_i\in\N$, monomials $d_{i,j}\in \cX$ and elements $c_{i,j}\in
LE_{\cal F}(x)w_i$, $1\leq i\leq k$, $1\leq j\leq m_i$, some $r_h\in\R$, and
$h^{^+}\in LE_{\cal F}(x)$ of value $vh^{^+}>0$, such that
\begin{equation}                            \label{rprh+}
h\>=\>c_{1,1}d_{1,1}+\ldots+c_{1,m_1}d_{1,m_1}+\ldots+c_{k,1}d_{k,1}+\ldots+c_{k,m_k}d_{k,m_k}
\,+\,r_h\,+\,h^{^+}
\end{equation}
with:
\sn
1) \ the values of the summands under the valuation $v$ are strictly increasing,
\sn
2) \ for each $i$, $1\leq i\leq k$,
\[
w_ic_{i,1}d_{i,1}\><\>\ldots\><\>w_ic_{i,m_i}d_{i,m_i}\>,
\]
and the values $vd_{i,j}$, $1\leq j\leq m_i\,$ generate an archimedean ordered subgroup of $vLE_{\cal F}(x)$,
\sn
3) \ for each $i$, $1\leq i\leq k-1$,
\[
c_{i+1,1}d_{i+1,1}+\ldots+c_{i+1,m_{i+1}}d_{i+1,m_{i+1}}+\ldots+c_{k,1}d_{k,1}+\ldots+c_{k,m_k}d_{k,m_k}
\]
lies in
$LE_{\cal F}(x)w_i\,$.
\pars
With these properties, the summands $c_{i,j}$, $d_{i,j}$ and the elements $r_h$ and $h^{^+}$ are uniquely determined.
\end{theorem}

Given the representation (\ref{rprh+}) of an element $h$ according to this theorem, the finite sum
\[
\mbox{\rm pp}(h)\>:=\> c_{1,1}d_{1,1}+\ldots+c_{1,m_1}d_{1,m_1}+\ldots+c_{k,1}d_{k,1}+\ldots+c_{k,m_k}d_{k,m_k}
\]
will be called the \bfind{principal part} of $h$; we set $\mbox{\rm pp}
(h):=0$ if $vh\geq 0$. The principal part is uniquely
determined once the set of monomials in $LE_{\cal F}(x)$ is fixed.
Note that $v(h-\mbox{\rm pp}(h)-r_h)>0$ with $r_h\in\R$.
\bn
The following lemma is the core of our proof:
\begin{lemma}                               \label{dense}
Let $(K,w)$ be a valued field with archimedean value group. Assume that $K=K_0(z_j\mid j\in J)$, where the values
$wz_j\,$, $j\in J$, are rationally independent and $w$ is trivial on $K_0\,$. Denote by $\cZ$
the multiplicative group $\langle z_j\mid j\in J \rangle$ generated by the elements $z_j\,$. Then the group ring
\[
R\>:=\> K_0[\cZ]
\]
lies dense in $K$ (with respect to the topology induced by $w$). Moreover, for each $a\in K\setminus \cO_w$ there
are uniquely determined elements $c_i\in K_0$ and $d_i\in \cZ$ with $wc_id_i<0$, $1\leq i \leq m$, such that
\begin{equation}                                    \label{=wsum}
a\,-\,\sum_{i=1}^m c_i d_i\,\in\,\cO_w\>.
\end{equation}
The same holds if we replace $K$ by its henselization or its completion.
\end{lemma}
\begin{proof}
In order to prove that $R$ lies dense in $K$ we have to show that for every $a\in K$ and every $\alpha\in
wK$ there is $a'\in R$ such that $w(a-a')>\alpha$.
Every $a\in K$ can be written as a quotient of two polynomials over $K_0$ in
finitely many of the $z_j\,$, that is,
\[
a\>=\> \frac{b'_1d'_1+\ldots+b'_k d'_k}{b''_1d''_1+ \ldots+b''_\ell d''_\ell}
\]
where $d'_1,\ldots,d'_k\in \cZ$ are distinct, $d''_1,\ldots,d''_\ell\in \cZ$ are distinct,
and $b'_i,b''_i\in K_0\setminus\{0\}$. From the rational independence of the values $wz_j$ it follows that every two
distinct elements in $\cZ$ and hence all $b''_i d''_i$ have distinct values. Therefore, we may assume that
$b''_1d''_1$ is the unique summand of least value in the denominator. We write
\[
b''_1d''_1+\ldots+b''_\ell d''_\ell\>=\>b''_1d''_1(1-d)\;\;\mbox{ with }\;
d\>:=\> -\frac{b''_2d''_2}{b''_1d''_1}-\ldots-\frac{b''_\ell d''_\ell}{b''_1d''_1}\>.
\]
Note that $\frac{d''_2}{d''_1},\ldots,\frac{d''_\ell}{d''_1}$ are elements of
$\cZ$ of positive value. Hence, also $wd>0$, and $w(1-d)=0$. It follows that
\[
w\left(\frac{1}{1-d}-\sum_{i=0}^{\ell} d^i\right)\>=\> w\left(1-(1-d)\sum_{i=0}^{\ell} d^i\right)
\>=\> w(-d^{\ell+1}) \>=\> (\ell+1)wd
\]
for every integer $\ell\geq 1$. Take $\alpha\in wK$. Since $wK$ is
archimedean, we can choose $\ell$ as big as to obtain that
\[
(\ell+1)wd\>\geq\>\alpha-w(b'_1d'_1+ \ldots +b'_kd'_k) (b''_1d''_1)^{-1}\>.
\]
For
\[
a'\>:=\>\left(\frac{b'_1d'_1}{b''_1d''_1}+\ldots +\frac{b'_kd'_k}{b''_1d''_1}\right) \sum_{i=0}^{\ell}
d^i\in R\;,
\]
this yields that
\[
w(a-a')\>=\> w(b'_1d'_1+\ldots +b'_kd'_k)(b''_1d''_1)^{-1}\left(\frac{1}{1-d}-\sum_{i=0}^{\ell} d^i\right)
\> \geq\>\alpha\>,
\]
showing that $R$ lies dense in $K$. Deleting all summands from $a'$ of value at least $\alpha$, we obtain a sum as
in (\ref{=wsum}) such that $wc_id_i<\alpha$ for all $i$ and $w(a-\sum_{i=1}^n c_i d_i)\geq\alpha$. For $\alpha=0$
this proves the existence of the elements $c_i\,$, $d_i$ as in the statement of the lemma. We have to prove their
 uniqueness.

\pars
Take two elements $r,r'\in R$ in which all summands have value smaller than $\alpha$, and such that $w(a-r)\geq
\alpha$ and $w(a-r')\geq\alpha$. It follows that $w(r-r')\geq\alpha$. Allowing the coefficients $b_i,c_i$ to be
zero, we can write $r=c_1d_1+\ldots+c_md_m$ and $r'=b_1d_1+\ldots+b_md_m$ where $d_1,\ldots,d_m\in \cZ$ are distinct
and $b_i,c_i\in K_0\,$. Then
\[
r'-r\>=\> (b_1-c_1)d_1+\ldots+(b_m-c_m)d_m\>.
\]
As the value of this sum is equal to the minimum of its summands $(b_i-c_i)d_i$, we see that $w(b_i-c_i)d_i
\geq w(r'-r) \geq\alpha$ for all $i$. But if there is some $i$ such that $b_i\ne c_i\,$, then this yields
$wd_i\geq\alpha$. As $b_i\ne 0$ or $c_i\ne 0$ it then follows that $wb_id_i=wd_i\geq\alpha$ or $wc_id_i=
wd_i\geq\alpha$, a contradiction to our initial assumption.
Consequently, the representation $r=c_1d_1+\ldots+c_md_m$ is uniquely determined when all $c_i$ are nonzero.

\pars
Every valued field is dense in its completion (by
definition). Since $wK$ is archimedean, the henselization of
$(K,w)$ lies in the completion and thus, $(K,w)$ is also dense in
its henselization. Since density is transitive, we find that $R$ is also
dense in the henselization and in the completion of $(K,w)$. It follows that the assertions we have proved for $a\in K$ also hold when $a$ lies in the henselization or completion.
\end{proof}

\pars
 The following is \cite[Lemma 6.41]{[KS]}:

\begin{lemma}                           \label{lfinite}
Let $x_i\in M$ such that $x_i>0$ and the values $vx_i\,$, $i\in I$ are
rationally independent. Then
\begin{equation}                            \label{finite}
\R(x_i\mid i\in I)^{\rm r\cal F}\;=\>
\bigcup_{I_0\subset I \>\rm finite}\;\;\bigcup_{k\in\N}^{}\;\;
\R(x_i^{1/k}\mid i\in I_0)^{\rm h{\cal F}}\>.
\end{equation}
\end{lemma}

\parm
In order to prove Theorem~\ref{thrprh}, take any $h\in LE_{\cal F}(x)$. We will work with the representation 
of $LE_{\cal F}(x)$ as given in Theorem~\ref{mth1}.
Lemma \ref{lfinite} shows that there is a finitely generated subgroup $\cX_h$ of $\cX$ such that $h\in
\R(\cX_h)^{\rm h{\cal F}}\subset \R(\cX_h)^{\rm r{\cal F}}$. Denote by $\cX'$ the divisible hull
of $\cX_h$ inside the divisible group $\cX$.
Since $v\cX'$ is isomorphic to $\cX'$ which is the divisible hull of a finitely generated abelian group,
it must have finite rational rank $\dim_{\Q} \Q\otimes v\cX'$. Therefore, $v\cX'$ has only finitely many convex subgroups, say,
\[
v\cX'\>=\>\Gamma_1\>\supset\>\Gamma_2\>\supset\>\ldots\>\supset\>\Gamma_{k+1}\>=\>\{0\}
\]
%
%
%
such that $\Gamma_i/\Gamma_{i+1}$ is archimedean ordered, for $1\leq i\leq k$.
Further, we choose convex valuations $w_1<\ldots< w_k$ on $LE_{\cal F}(x)$ such that the
restriction of $w_i$ to $K$ is a convex valuation corresponding to $\Gamma_{i+1}$, having value group $v\cX'/
\Gamma_{i+1}\,$. Since $\Gamma_{k+1}=\{0\}$, we can choose $w_k=v$. Each
\[
\cX'_i\>:= \> \{x'\in\cX'\mid vx'\in\Gamma_i\}\,,\quad 1\leq i\leq k
\]
is a $\Q$-sub vector space of the $\Q$-vector space $\cX'$.
We choose a $\Q$-basis $\cB_k$ of $\cX'_k$, and if $k>1$, $\Q$-bases $\cB_i$ of complements of
$\cX'_{i+1}$ in $\cX'_i$ for $1\leq i\leq k-1$. We obtain that $\cB:=\bigcup_i \cB_i$ is a $\Q$-basis of $\cX'$ and
that $h\in \R(\cB)^{\rm r{\cal F}}$.

From Lemma~\ref{lfinite} we infer that $h\in \R (b^{1/\ell}\mid b\in \cB)^{\rm h{\cal F}} =:K$ for some
$\ell\in\N$. Since we may replace each basis element $b$ by $b^{1/\ell}$, we can assume that $\ell=1$.

\pars
Now we proceed by induction on $k$. We assume that $k=1$ or that the theorem has been proven for all elements in
$\R (\bcB)^{\rm h{\cal F}}$, where $\bcB\subset\cB$ corresponds to a value group that has less convex subgroups than
$v\cX'$.

The value group of the convex valuation $w_1$ on $K$ is the archimedean ordered group $v\cX'/\Gamma_{k-1}\,$.
We set $\bcB=\emptyset$ if $k=1$, and $\bcB=\bigcup_{2\leq i\leq k} \cB_i$ if $k>1$. From
Lemma~\ref{basic2} we infer that $\R (\bcB)^{\rm h{\cal F}}$ is a
field of representatives for the residue field $Kw_1\,$.

We apply Lemma~\ref{dense} with $\cZ$ equal to the group generated by $\cB_1$ and $K_0=\R (\bcB)^{\rm h{\cal F}}$
to deduce the existence of uniquely determined elements
\[
c_{1,j}\,\in\, \R (\bcB)^{\rm h{\cal F}}\>\subset\> \R(\cX_{w_1})^{\rm h{\cal F}}
\>=\> LE_{\cal F}(x)w_1
\]
and
\[
d_{1,j}\,\in\, \cZ\>\subset\>\cX\>, \quad 1\leq j \leq m_1\>,
\]
with
\[
w_1c_{1,1}d_{1,1}\><\>\ldots\><\>w_ic_{1,m_1}d_{1,m_1} \><\>0
\]
and such that $w_1(h-\sum_{j=1}^{m_1} c_{1,j} d_{1,j})\geq 0$. Thus,
there is a unique element $\bar h\in \R (\bcB)^{\rm h{\cal F}}$ such that
\[
w_1(h-\sum_{i=1}^m c_i d_i-\bar h)> 0\>.
\]
By definition of $\cB_1$ we have that $v\cZ\subseteq\Gamma_1$ and $v\cZ\cap\Gamma_2=\{0\}$, which shows that
$v\cZ$ is archimedean. The same consequently holds for its subgroup that is generated by the values
$vd_{1,j}$, $1\leq j \leq m_1\,$.

\pars
If $k=1$, then $\R (\bcB)^{\rm h{\cal F}}=\R$ and we can set $r_h=\bar h\in \R$ to
obtain that $v(h-\sum_{i=1}^m c_i d_i -r_h)> 0$.

If $k>1$, then by induction hypothesis we know that our theorem holds for the element $\bar h$. We can thus write
\[
\bar h\>=\> c_{2,1}d_{2,1}+\ldots+c_{2,m_2}d_{2,m_2}+\ldots+c_{k,1}d_{k,1}+\ldots+c_{k,m_k}d_{k,m_k}
\,+\,r_{\bar h}\,+\,{\bar h}^{^+}
\]
such that the conditions of Theorem~\ref{thrprh} are satisfied for $\bar h$ in place of $h$ (and with $w_1$
omitted). Now we set $r_h:=r_{\bar h}$ and $h^{^+}:={\bar h}^{^+}$
to obtain a representation of the form
(\ref{rprh+}) for $h$. It is straightforward to see that properties 2) and 3) are satisfied. Also 1) holds since
$w_1 z\ne 0$ for all $z\in \cZ$, which implies that $vc_{1,j}d_{1,j}<\Gamma_2$ for $1\leq j\leq m_1\,$, whereas
$w_1 y=0$ for all $y\in \R (\bcB)^{\rm h{\cal F}}$, which implies that
$vc_{i,j}d_{i,j}\in\Gamma_2$ for $2\leq i\leq k$ and $1\leq j\leq m_i\,$. This completes the proof of Theorem~\ref{thrprh}.

%
\section{Applications}\label{section5}

\begin{theorem}                             \label{asppth}
Let $f,g:\R\rightarrow\R$ be ultimately positive
$\R$-definable functions. Then $f$ is asymptotic to $rg$ on
$\R$ for some positive $r\in\R$ if and only if the germs
$\log f$ and $\log g$ in $H(\R)$ have the same principal part.
\end{theorem}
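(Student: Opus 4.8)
The plan is to reduce the asymptotic statement to a statement about germs in $H({\cal R})$ and then to relate the principal parts of $\log f$ and $\log g$ via the representation given by Theorem~\ref{thrprh}. First I would translate the hypothesis: by (\ref{vasymp}), $f$ is asymptotic to $rg$ on ${\cal R}$ for some positive $r\in{\cal R}$ exactly when $v_{\cal R}(f/(rg)-1)>v_{\cal R}{\cal R}$, i.e. $v_{\cal R}(f-rg)>v_{\cal R}g$. Taking logarithms (note $f,g$ are ultimately positive, so $\log f,\log g\in H({\cal R})=LE_{{\cal R},{\cal F}_T}(x)$), this is equivalent to $\log f - \log g$ being $v_{\cal R}$-infinitesimal, that is $v_{\cal R}(\log f-\log g)>0$: indeed $f=rg(1+\varepsilon)$ with $v_{\cal R}\varepsilon>0$ gives $\log f=\log r+\log g+\log(1+\varepsilon)$, and $\log r\in{\cal R}$, $v_{\cal R}\log(1+\varepsilon)=v_{\cal R}\varepsilon>0$. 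Conversely, if $\log f-\log g=r'+\delta$ with $r'\in{\cal R}$ and $v_{\cal R}\delta>0$, then $f/g=\exp(r')\exp(\delta)$ and by Lemma~\ref{prelTA} (applied with $w=v_{\cal R}$) $v_{\cal R}(\exp\delta-1)=v_{\cal R}\delta>0$, so $f$ is asymptotic to $\exp(r')g$ with $\exp(r')\in{\cal R}$ positive. Hence the asymptotic condition is equivalent to: $\log f$ and $\log g$ differ by an element of ${\cal R}$ plus a $v_{\cal R}$-infinitesimal.

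Next I would show that this last condition is equivalent to $\log f$ and $\log g$ having the same principal part. Write $h:=\log f$, $h':=\log g$. If $v_{\cal R}h\geq 0$ and $v_{\cal R}h'\geq 0$, both principal parts are $0$ by definition and $h-h'\in{\cal O}_{v_{\cal R}}$, with residue in ${\cal R}$, so $h-h'$ is in ${\cal R}$ plus a $v_{\cal R}$-infinitesimal; this direction is immediate. The substantive case is $v_{\cal R}h<0$. Apply Theorem~\ref{thrprh} to $h$ to get $h=\mbox{\rm pp}(h)+r_h+h^{^+}$ with $v_{\cal R}h^{^+}>0$, $r_h\in{\cal R}$, and $\mbox{\rm pp}(h)=c_1d_1+\ldots+c_md_m$ with $v_{\cal R}c_1d_1<\ldots<v_{\cal R}c_md_m<0$; similarly for $h'$. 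If $\mbox{\rm pp}(h)=\mbox{\rm pp}(h')$, then $h-h'=(r_h-r_{h'})+(h^{^+}-h'^{^+})$ is in ${\cal R}$ plus a $v_{\cal R}$-infinitesimal, giving one implication. For the converse, suppose $\mbox{\rm pp}(h)\ne\mbox{\rm pp}(h')$; I would argue that then $v_{\cal R}(h-h')<0$, so $h-h'$ cannot be of the form (element of ${\cal R}$) + ($v_{\cal R}$-infinitesimal), since any such element has $v_{\cal R}\geq 0$. The key point is the \emph{uniqueness} clause in Theorem~\ref{thrprh}: to exploit it, I would apply the theorem to the element $h-h'$ itself (if $v_{\cal R}(h-h')<0$) or observe that $h=h'+(r_h-r_{h'})+(\text{something of positive value})+(\mbox{\rm pp}(h)-\mbox{\rm pp}(h'))$, and that $\mbox{\rm pp}(h)-\mbox{\rm pp}(h')$, being a difference of two principal parts with distinct summands, has negative $v_{\cal R}$-value; combining the representation of $h'$ with these extra terms and invoking uniqueness shows $\mbox{\rm pp}(h)$ is forced to equal the ``negative part'' of $h$, which is $\mbox{\rm pp}(h)$ by construction — so equality of the infinitesimal-mod-${\cal R}$ classes forces equality of principal parts.

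The main obstacle I anticipate is the bookkeeping in this last converse step: the monomials $d_n$ are only determined up to multiplication by monomials of trivial $w_{d_n}$-value (as stressed in the proof of Lemma~\ref{lrprh}), so ``$\mbox{\rm pp}(h)\ne\mbox{\rm pp}(h')$'' must be interpreted as inequality of the uniquely determined \emph{summands} $c_nd_n$, not of the individual $c_n$ or $d_n$. I would therefore phrase everything in terms of the summands $c_nd_n$ and their $v_{\cal R}$-values, using that distinct summands appearing in these canonical representations have distinct $v_{\cal R}$-values (this is built into Theorem~\ref{thrprh} and traces back to the rational independence of the $v_{\cal R}x_i$ and Lemma~\ref{dense}). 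Concretely: form $h-h'$; all its summands of negative $v_{\cal R}$-value are, after cancellation, exactly the symmetric difference of the summand-sets of $\mbox{\rm pp}(h)$ and $\mbox{\rm pp}(h')$ together with possible new summands $c_nd_n-c'_nd'_n$ where the two have the same $d$ but different coefficient; in all cases, if the principal parts differ there is at least one surviving summand of negative value, whence $v_{\cal R}(h-h')<0$ and we are done. The only care needed is to check that no cancellation can reduce a nonzero difference of principal parts to something of nonnegative value — which follows because the minimal-value surviving term cannot be cancelled by any term of strictly larger value. This is routine once the uniqueness statement is in hand, so I expect no deeper difficulty beyond careful handling of the non-uniqueness of the $d_n$ themselves.
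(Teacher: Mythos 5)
Your proposal is correct and takes essentially the same route as the paper: translate asymptoticity to $v_{\cal R}(\log f-\log g)\geq 0$, then invoke the uniqueness clause of Theorem~\ref{thrprh}. The paper's own proof is three lines and simply cites uniqueness, whereas you elaborate the converse via a term-by-term cancellation argument; the more direct way to see it (and probably what the paper intends) is to write $\log f=\log g+r_0+\delta$ with $r_0\in{\cal R}$, $v_{\cal R}\delta>0$, substitute the decomposition of $\log g$ to obtain a representation of $\log f$ of the form (\ref{rprh+}) with principal part $\mbox{\rm pp}(\log g)$, and conclude by uniqueness — avoiding the bookkeeping about surviving summands.
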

\begin{proof}
We know from (\ref{asylog}) that $f$ is asymptotic to $rg$ on $\R$ if and
only if $v(\log f -\log rg)>0$. This in turn is equivalent to
$v(\log f -\log g)\geq 0$, since if the latter holds, then
there is some $r_0\in\R$ such that $v(\log f -\log g
-r_0)>0$, and we set $r=\exp r_0\,$. By the uniqueness of the principal
part, $v(\log f -\log g)\geq 0$ if and only if $\mbox{\rm pp}
(\log f)=\mbox{\rm pp}(\log g)$.
\end{proof}

To apply this theorem in the spirit of the Hardy problem, we take
${\cal F}$ to be any set of restricted analytic functions, closed
under partial derivations. Then by applying \cite[Theorem 6.30]{[KS]} 
simultaneously for ${\cal F}$ and ${\cal F}
_{\rm an}\,$, we find index sets $I_{\cal F}\subset I$ and elements
$x_i$ such that $LE_{{\cal F}}(x)=\R(x_i\mid i\in I_{\cal F})
^{\rm r\cal F}$ and $LE_{{\cal F}_{\rm an}}(x)=\R(x_i\mid i\in I)
^{{\rm r}{\cal F}_{\rm an}}$. So the monomials of $LE_{{\cal F}}(x)$
will also be monomials of $LE_{{\cal F}_{\rm an}}(x)$. Moreover, we
can take
\[
LE_{{\cal F}}(x)w \>\subseteq\> LE_{{\cal F}_{\rm an}}(x)w
\]
for each convex valuation $w$ and suitable $m_0\,$, according to
Theorem~\ref{mth1}. Using principal parts
determined by this choice of the $x_i$ and the residue fields, we get:

\begin{corollary}                           \label{asppcor}
Assume that $h:\R\rightarrow\R$ is definable in $\R_{\rm an, exp}$.
Then $\exp h$ is asymptotic to a composition of semialgebraic
functions, exp, log and restricted analytic functions in ${\cal F}$,
if and only if $\mbox{\rm pp}(h)\in LE_{{\cal F}}(x)$.
\end{corollary}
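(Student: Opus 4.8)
The plan is to translate the corollary, via Theorem~\ref{asppth}, into a purely internal statement about principal parts of elements of $LE_{\R,{\cal F}}(x)$. First I would record that, exactly as $LE$ is the field of germs of all compositions of semialgebraic functions, $\exp$ and $\log$ (cf.\ the Introduction), the field $LE_{\R,{\cal F}}(x)$ is the field of germs of all compositions of semialgebraic functions, $\exp$, $\log$ and the restricted analytic functions in ${\cal F}$: the germs of such compositions form a real closed subfield of $H(\R_{\rm an,exp})$ which contains $\R(x)$ and is closed under $\exp$, $\log$ and ${\cal F}$, hence contains $LE_{\R,{\cal F}}(x)$, while each such germ visibly lies in $LE_{\R,{\cal F}}(x)$. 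Thus the corollary amounts to: for $h$ definable in $\R_{\rm an,exp}$, the germ $\exp h$ is asymptotic to an ultimately positive element of $LE_{\R,{\cal F}}(x)$ if and only if $\mbox{\rm pp}(h)\in LE_{\R,{\cal F}}(x)$.

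For ``$\Leftarrow$'', put $g:=\exp(\mbox{\rm pp}(h))$; since $LE_{\R,{\cal F}}(x)$ is exp-closed and contains $\mbox{\rm pp}(h)$, it contains $g$, and $g$ is ultimately positive. Now $\log g=\mbox{\rm pp}(h)$ is a finite sum of monomial terms of strictly negative $v_{\cal R}$-value, so the uniqueness in Theorem~\ref{thrprh} gives $\mbox{\rm pp}(\log g)=\mbox{\rm pp}(\mbox{\rm pp}(h))=\mbox{\rm pp}(h)=\mbox{\rm pp}(\log\exp h)$. Hence Theorem~\ref{asppth}, applied to $f=\exp h$ and $g$, yields $\exp h\sim rg$ for a suitable positive $r\in\R$, and $rg\in LE_{\R,{\cal F}}(x)$ is the required composition.

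For ``$\Rightarrow$'', assume $\exp h\sim g$ with $g\in LE_{\R,{\cal F}}(x)$ ultimately positive. Then in particular $\exp h$ is asymptotic to $rg$ with $r=1$, so Theorem~\ref{asppth} gives $\mbox{\rm pp}(h)=\mbox{\rm pp}(\log\exp h)=\mbox{\rm pp}(\log g)$, and $\log g\in LE_{\R,{\cal F}}(x)$ by log-closedness. It then suffices to show that $\mbox{\rm pp}(k)\in LE_{\R,{\cal F}}(x)$ for every $k\in LE_{\R,{\cal F}}(x)$, and to apply this with $k=\log g$. Here I would use the compatible choices fixed just before the corollary: by Lemma~\ref{lfinite} applied to the representation $LE_{\R,{\cal F}}(x)=\R(x_i\mid i\in I_{\cal F})^{\rm r\cal F}$, such a $k$ lies in $\R(x_i^{1/j}\mid i\in I_0)^{\rm h\cal F}$ for some finite $I_0\subset I_{\cal F}$ and $j\in\N$; running the proofs of Lemma~\ref{lrprh} and Theorem~\ref{thrprh} inside this field, the monomials $d_n$ occurring in the decomposition~(\ref{rprh+}) of $k$ are built from the $x_i$ with $i\in I_0\subset I_{\cal F}$, hence are monomials of $LE_{\R,{\cal F}}(x)$, and the coefficients $c_n$ lie in the residue fields $LE_{\R,{\cal F}}(x)w_{d_n}$, which by Theorem~\ref{mth2} and the construction of Section~\ref{sectbconstr} equal $LE_{\R,{\cal F}}^{w_{d_n}}(\log_{m_0}x)\subset LE_{\R,{\cal F}}(x)$ for suitable $m_0$. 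Therefore $\mbox{\rm pp}(k)=\sum_n c_nd_n\in LE_{\R,{\cal F}}(x)$.

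The step I expect to be the main obstacle is exactly this last one: checking that when $k\in LE_{\R,{\cal F}}(x)$, \emph{both} the monomials and the residue-field coefficients delivered by the principal-part construction may be taken inside the smaller field. This amounts to tracing through Lemmas~\ref{lfinite} and~\ref{lrprh} and Theorem~\ref{thrprh} with the simultaneous representations for ${\cal F}$ and ${\cal F}_{\rm an}$ and the nesting $LE_{\R,{\cal F}}(x)w\subset LE_{\R,{\cal F}_{\rm an}}(x)w$ of Corollary~\ref{FsubF} kept fixed throughout; the rest of the argument is a formal manipulation of Theorem~\ref{asppth} together with the uniqueness of principal parts.
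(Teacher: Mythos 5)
Your proposal is correct and matches the implicit argument the paper relies on (the paper states the corollary without a separate proof, resting on Theorem~\ref{asppth} and the compatible simultaneous construction for ${\cal F}$ and ${\cal F}_{\rm an}$ fixed just beforehand). The three ingredients you isolate are exactly what is needed: the identification of $LE_{\R,{\cal F}}(x)$ with the germ field of the stated compositions (the same reading the paper makes for $LE=LE_{\R,{\cal F}_{LE}}(x)$); taking $g=\exp(\mbox{\rm pp}(h))$ and observing $\mbox{\rm pp}(\mbox{\rm pp}(h))=\mbox{\rm pp}(h)$ for the ``if'' direction; and, for the ``only if'' direction, reducing to the closure statement $\mbox{\rm pp}(k)\in LE_{\R,{\cal F}}(x)$ for $k\in LE_{\R,{\cal F}}(x)$. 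Your verification of the last point is the only place I would sharpen the wording: the cleanest formulation is to run Lemma~\ref{lrprh} and Theorem~\ref{thrprh} inside $LE_{\R,{\cal F}}(x)$, note that the resulting decomposition of $k$ also satisfies all the hypotheses of Theorem~\ref{thrprh} for $LE_{\R,{\cal F}_{\rm an}}(x)$ (the $d_n$ are monomials of both fields since $I_{\cal F}\subset I$, and $c_n\in LE_{\R,{\cal F}}(x)w_{d_n}\subset LE_{\R,{\cal F}_{\rm an}}(x)w_{d_n}$ by the nesting (\ref{form2})/Corollary~\ref{FsubF}), and then invoke the uniqueness clause of Theorem~\ref{thrprh} to conclude that the two decompositions coincide. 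You gesture at this in your closing paragraph; making the uniqueness step explicit is the one refinement worth adding, but the argument as written has no substantive gap.
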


\pars
As an example, let us reconsider the Hardy problem. Here we assume in
addition that the $x_i$ include $x$ (cf.\ Theorem~\ref{mth1}).

Take two functions $f,g:\R\rightarrow\R$, definable in $\R_{\rm an,exp}$.
Assume that $\exp f(x)$ is asymptotic to $g(x)$, that is, $\lim_{x
\rightarrow\infty}\frac{\exp f(x)}{g(x)}=1$. This is equivalent to
$\lim_{x\rightarrow\infty}f(x)-h(x)=0$, where $h:(r,\infty) \rightarrow
\R$ for suitable $r\in\R$ is the function $\log g(x)$, which again is
definable in $\R_{\rm an, exp}$. This means that the function
$f(x)-h(x)$ is ultimately smaller than every nonzero constant function.
Equivalently, its germ $f-h$ in $H(\R_{\rm an, exp})$ is infinitesimal,
or in other words, $v(f-h)>0$.
\pars
As in \cite{[D--M--M2]}, let the function $i(x)$ denote the compositional
inverse of the function $x\log x$. Identifying $i(x)$ with its germ, we
have that $i(x)\in H(\R_{\rm an, exp})$. But by an argument about
Liouville extensions of the Hardy field $\R(x)$,
\cite[Corollary~4.6]{[D--M--M2]} shows that $i(x)\notin LE:=LE_{{\cal F_{\rm an}}}(x)$. Assume that $\exp i(x)$
were asymptotic to a function $g(x)$ which is a composition of
semialgebraic functions, exp and log. Through identification with its
germ, the latter means that $g(x)\in LE$. Then also $h(x):=\log g(x)\in
LE$, and $v(i(x)-h(x))>0$. Further, one shows as in \cite{[D--M--M2]} that
there is a convergent power series $f(X,Y)$ such that
\[i(x)\>=\>\frac{x}{\log x}\left(1+f\left(\frac{\log\log x}{\log x}\,,\,
\frac{1}{\log x}\right)\right)\;.\]
Now let $w$ be the convex valuation corresponding to the largest convex
subgroup not containing $vx$. This contains $v\log x$. Therefore, $w\log
x=0$ and $w\frac{\log x}{x}=-wx>0$. With
\begin{equation}                            \label{tildef}
\tilde{f}:=f\left(\frac{\log\log x}{\log x}\,,\,\frac{1}{\log x}\right)
\,\in\,\R(\log x,\log\log x)^{r{\cal F}_{\rm an}}\,\subseteq\,
LE_{{\cal F}_{\rm an}}(\log x)w\>,
\end{equation}
the representation of $i(x)$ is just
$i(x)=cx$, where $c=\frac{1}{\log x} (1+\tilde{f})\in H(\R_{\rm an,
exp})w$. Thus, $\mbox{\rm pp} (i(x))=i(x)\notin LE$. Hence by our
Corollary \ref{asppcor}, $\exp i(x)$ is not asymptotic to any element of $LE$.

\parb
Let us give a further application of Theorem~\ref{thrprh}. Denote by
${\cal L}_{\cal F}$ the language of ordered rings, enriched by symbols
for the functions from ${\cal F}$. Recall that every generalized power
series field $\R((G))$ has a canonical cross-section, sending $\alpha\in
G$ to the element $1_\alpha\in\R((G))$ which has a $1$ at $\alpha$ and
zeros everywhere else. ($1_\alpha$ is the characteristic function of the
singleton $\{\alpha\}$.)
\begin{corollary}                           \label{embps}
Take any ${\cal L}_{\cal F}$-embedding of $LE_{\cal F}(x)$ in
some generalized power series field $\R((G))$, and denote by $L$ its
image in $\R((G))$. Assume that the restriction of the canonical
cross-section of $\R((G))$ to $vL$ is a cross-section $\pi$ of $(L,v)$,
and that $L=\R(\pi vL)^{\rm r\cal F}$. Then the nonzero elements of the
support of each element in $L$ are bounded away from $0$.
\end{corollary}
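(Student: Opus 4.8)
The plan is to extract the support of an element of $L$ from the ``truncation at $0$'' decomposition provided by Theorem~\ref{thrprh}. Identify $L$ with $LE_{\R,{\cal F}}(x)$ via the given embedding. Since the embedding is order preserving, the restriction to $L$ of the canonical (natural) valuation of $\R((G))$ is the natural valuation $v=v_{\R}$, with value group $vL\subseteq G$; the hypotheses say that the canonical cross-section restricts to the cross-section $\pi$ of $(L,v)$ and that $L=\R(\pi vL)^{\rm r\cal F}$, so the monomials of $L$ are precisely the elements $r\,t^{\gamma}$ with $r\in\R^{\times}$, $\gamma\in vL$, each of singleton support. If $a\in L$ has $v a\ge 0$ there is nothing to prove: then $\mathrm{supp}(a)\subseteq G^{\ge 0}$, and since supports in $\R((G))$ are well ordered, $\mathrm{supp}(a)\cap G^{>0}$ (if nonempty) has a least element $\gamma_{0}>0$, so every nonzero element of $\mathrm{supp}(a)$ is $\ge\gamma_{0}$.

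Now assume $v a<0$ and apply Theorem~\ref{thrprh}: $a=c_{0}d_{0}+\dots+c_{m}d_{m}+r_{a}+a^{+}$ with $d_{n}=s_{n}t^{\delta_{n}}$ monomials ($s_{n}\in\R^{\times}$, $\delta_{n}\in vL$), $c_{n}\in LE_{\R,{\cal F}}(x)w_{d_{n}}$, $r_{a}\in\R$, $v a^{+}>0$, and $v(c_{0}d_{0})<\dots<v(c_{m}d_{m})<0$. The support of $a^{+}$ lies in $G^{>0}$ and (by well ordering) is bounded away from $0$, while $r_{a}$ contributes at most $0\in G$; since the support of a sum is contained in the union of the supports, and a finite union of sets bounded away from $0$ is again bounded away from $0$, it suffices to show that each $\mathrm{supp}(c_{n}d_{n})$ is bounded away from $0$. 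Fix $n$, put $w=w_{d_{n}}$, and let $H=H_{w}$ be its associated convex subgroup of $vL$. By definition of $w_{d_{n}}$ one has $\delta_{n}\notin H$, hence $\delta_{n}>H$ or $\delta_{n}<H$; and $c_{n}$, being a nonzero element of the residue field $LE_{\R,{\cal F}}(x)w$ realised inside $L$, is a unit of ${\cal O}_{w}$, so $v c_{n}\in H$. From $v(c_{n}d_{n})=\delta_{n}+v c_{n}<0$ we conclude $\delta_{n}<H$.

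The crux is $\mathrm{supp}(c_{n})\subseteq H$, where we write $\R((H))=\{b\in\R((G))\mid\mathrm{supp}(b)\subseteq H\}$ (a real closed subfield of $\R((G))$, since $H$, being convex in the divisible group $vL$, is divisible). I would obtain this from the description of the coefficient field $LE_{\R,{\cal F}}(x)w=LE^{w}_{\R,{\cal F}}(\log_{m_{0}}x)$ in Theorems~\ref{mth2} and~\ref{mtr1}, by showing this field lies inside $\R((H))$. The mechanism: the relevant $m_{0}$ satisfies $w\log_{m_{0}}x=0$, so $v\log_{m_{0}}x\in H$, and (granting that $\log_{m_{0}}x$ has support in $H$, which should follow from the hypothesis $L=\R(\pi vL)^{\rm r\cal F}$) the base field $\R(\log_{m_{0}}x)$ of the construction lies in $\R((H))$; the values $v\log_{k}x$ then lie in $H$ for all $k\ge m_{0}$ by convexity, using $v\log_{m_{0}}x\ll v\log_{m_{0}+1}x\ll\cdots<0$ (Corollary~\ref{corprelGA}); each step of the construction — real closure, $\ef$-powers, ${\cal F}$-closure (via Taylor expansion, using that ${\cal F}$ is closed under partial derivations so all Taylor coefficients are real), and logarithm — preserves ``support in $H$'' for series whose values already lie in $H$; and the exponential is taken in the construction only when the outcome stays in ${\cal O}_{w}$ (value in $H$), and — since one must then also take its iterated logarithms inside ${\cal O}_{w}$, whose values strictly descend but must remain in the convex set $H$ — this never forces support out of $H$. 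Hence $c_{n}\in LE_{\R,{\cal F}}(x)w\subseteq\R((H))$.

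Granting the crux, the rest is immediate: $\mathrm{supp}(c_{n}d_{n})=\delta_{n}+\mathrm{supp}(c_{n})\subseteq\delta_{n}+H$. If $H=\{0\}$ this is the single negative point $\delta_{n}$. If $H\ne\{0\}$, choose $0<\eta_{n}\in H$; then for every $h\in H$ we have $-\delta_{n}-h\notin H$ (else $-\delta_{n}\in H$) and $-\delta_{n}-h>H$ (as $\delta_{n}<H$ forces $-\delta_{n}>H$), so $-\delta_{n}-h>\eta_{n}$, i.e. $\delta_{n}+h<-\eta_{n}<0$; hence all of $\mathrm{supp}(c_{n}d_{n})$ lies below $-\eta_{n}$, in particular bounded away from $0$. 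Therefore $\mathrm{supp}(a)\setminus\{0\}$, being contained in the finite union of the sets $\mathrm{supp}(c_{n}d_{n})$ ($0\le n\le m$) and $\mathrm{supp}(a^{+})$, each bounded away from $0$, is itself bounded away from $0$, as claimed. The main obstacle is the crux step, and within it the precise verification that the construction's coefficient field for $w$ is a truncation realization — i.e. that starting from monomials with values in $H_{w}$ and applying $\exp$ only inside ${\cal O}_{w}$ cannot create support outside $H_{w}$; this is really the concrete Hahn-series content of the residue-field description in Theorem~\ref{mtr1}.
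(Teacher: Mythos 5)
Your high-level shape (decompose $a$ by Theorem~\ref{thrprh}, then bound the support of each term $c_nd_n$ away from $0$) matches the paper, and your elementary bound on $\mathrm{supp}(c_nd_n)$ given $\mathrm{supp}(c_n)\subseteq H_{w_{d_n}}$ is correct. But the crux step you isolate --- that $c_n\in LE_{\R,{\cal F}}(x)w_{d_n}$ has support in $H_{w_{d_n}}$, which you propose to get by showing $LE^w_{\R,{\cal F}}(\log_{m_0}x)\subseteq\R((H_w))$ for the residue field representatives produced by Theorem~\ref{mtr1} --- is a genuine gap, not merely a deferred detail. That containment is in effect a truncation-closedness statement for the closure of $\R(\log_{m_0}x)$ under real closure, ${\cal F}$, $\log$, and restricted $\exp$ inside a Hahn field, and establishing it would require exactly the kind of Mourgues--Ressayre / [D--M--M2] truncation machinery that the whole paper is designed to replace. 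Your sketch (``each step of the construction preserves support in $H$'') glosses over precisely the hard cases: taking $\exp$ of an element of ${\cal O}_w$ whose $v$-value is negative, and the ${\cal F}$-closure of a field that is not itself truncation closed. Theorem~\ref{mtr1} does not assert, and its proof does not deliver, any control on supports.

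The paper sidesteps this entirely by exploiting a flexibility you overlooked. The remark preceding Lemma~\ref{lrprh} points out that Lemma~\ref{lrprh} and Theorem~\ref{thrprh} only require a choice of embedded residue fields satisfying (\ref{form2}), not the specific fields $LE^w_{\R,{\cal F}}(\log_{m_0}x)$ from Theorem~\ref{mtr1}. In the Hahn-field setting one can therefore take as residue fields the canonical ones: $\R((G))w=\R((H_w))$, and by Lemma~\ref{basic2} applied to the $x_i=1_\alpha$ (for $\alpha$ in a maximal rationally independent subset $I\subset vL$) one gets $Lw=\R(1_\alpha\mid\alpha\in I\cap H_w)^{\rm r\cal F}\subset\R((H_w))\subset\R((G))$, with (\ref{form2}) automatic since $H_w\subset H_{w'}$ gives $\R((H_w))\subset\R((H_{w'}))$. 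With this choice $\mathrm{supp}(c_n)\subseteq H_{w_{d_n}}$ holds by construction, and the rest of your argument (your $-\eta_n$ bound, or the paper's cleaner $\frac{1}{2}vd_m$) goes through immediately. So: your proof is incomplete as written, and the missing idea is precisely the paper's observation that the residue-field representatives in Theorem~\ref{thrprh} can be chosen compatibly with the Hahn-series structure rather than via Theorem~\ref{mtr1}.
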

\begin{proof}
For every convex valuation $w$ with associated convex subgroup $H_w
\subset G$, we have that $\R((G))w=\R((H_w))$.

Let $I\subset vL$ be a maximal set of rationally independent values.
Set $x_i:=1_{\alpha}$ for $i=\alpha\in I$. Then $\R(x_i\mid i\in I)
^{\rm r}=\R(\pi vL)^{\rm r}$ and hence, $\R(x_i\mid i\in I)
^{\rm r\cal F}=\R(\pi vL)^{\rm r\cal F}=L$ by hypothesis.
The monomials obtained from the $x_i$ are precisely the elements of
the form $r\cdot 1_{\alpha}$ with $r\in\R$ and $\alpha\in vL$. Note
that if $\alpha<H_w\,$, then for every $c\in \R((H_w))$, the support
of $c\,r1_{\alpha}$ is bounded away from $0$ by every element $\beta$
which satisfies $\alpha+H_w<\beta<0$. For example, $\beta= \alpha/2$
is a good choice.

Take $h\in L$ and consider the representation (\ref{rprh+}) with
respect to the monomials $x_i$ and the residue fields $\R((H_w))$. Now
$\mbox{support}(h)\setminus\{0\}$ is the union of the support of $c_1d_1
+\ldots+c_md_m$ and the support of $h^{^+}$. The latter is bounded away
from 0 by $vh^{^+}$. The support of $c_1d_1+ \ldots+c_md_m$ is the union
of the supports of $c_1d_1,\ldots,c_md_m\,$. This union is bounded away
from 0 by $\frac{1}{2}vd_m\,$.
\end{proof}
\n
Note that the embeddings of $H(\R_{\rm an,exp})$ and of $LE$ in the
logarithmic power series field $\R((t))^{LE}$ given in \cite{[D--M--M2]}
satisfy the conditions of the corollary. (Recall that $\R((t))^{LE}$
can be viewed as a subfield of a suitable power series field.)
\bn
\bn
\bn

\end{document}